\renewcommand{\leq}{\leqslant}
\renewcommand{\geq}{\geqslant}
\newcounter{point}
\newcommand{\Cc}{\mathbf{C}}
\newcommand{\Aa}{\mathbf{A}}
\newcommand{\Zz}{\mathbf{Z}}
\newcommand{\Rr}{\mathbf{R}}
\newcommand{\Hh}{\mathbf{H}}
\newcommand{\Qq}{\mathbf{Q}}
\newcommand{\mods}[1]{\,(\mathrm{mod}\,{#1})}
\newcommand{\T}[1]{{}^t{{#1}}}
\newcommand{\ra}{\rightarrow}
\newcommand{\lra}{\longrightarrow}
\DeclareMathOperator{\rank}{rank}
\DeclareMathOperator{\Imag}{Im}
\DeclareMathOperator{\Reel}{Re}
\DeclareMathOperator{\Tr}{Tr}
\DeclareMathOperator{\Aut}{Aut}
\DeclareMathOperator{\Sp}{Sp}
\DeclareMathOperator{\GL}{GL}
\DeclareMathOperator{\SL}{SL}
\DeclareMathSymbol{\gena}{\mathord}{letters}{"3C}
\DeclareMathSymbol{\genb}{\mathord}{letters}{"3E}
\theoremstyle{plain}
\newtheorem{theorem}{Theorem}
\newtheorem{lemma}[theorem]{Lemma}
\newtheorem{corollary}[theorem]{Corollary}
\newtheorem{proposition}[theorem]{Proposition}
\theoremstyle{remark}
\newtheorem{remark}[theorem]{Remark}
\theoremstyle{definition}
\begin{document}

\title{A note on Fourier coefficients of Poincar\'e series}

\author{Emmanuel Kowalski}
\address{ETH Z\"urich -- D-MATH\\
  R\"amistrasse 101\\
  8092 Z\"urich\\
  Switzerland} \email{kowalski@math.ethz.ch}
\author{Abhishek Saha}
\address{ETH Z\"urich -- D-MATH\\
  R\"amistrasse 101\\
  8092 Z\"urich\\
  Switzerland} \email{abhishek.saha@math.ethz.ch}
\author{Jacob Tsimerman}
\address{Princeton University \\ Fine Hall, Princeton
  NJ 08540, USA}
\email{jtsimerm@math.princeton.edu}

\subjclass[2000]{11F10, 11F30, 11F46} 

\keywords{Poincar\'e series,
  Fourier coefficients, Siegel modular forms, orthogonality, Siegel
  fundamental domain}

\begin{abstract}
  We give a short and ``soft'' proof of the asymptotic orthogonality
  of Fourier coefficients of Poincar\'e series for classical modular
  forms as well as for Siegel cusp forms, in a qualitative form.
\end{abstract}

\maketitle

\section{Introduction}

The Petersson formula (see, e.g.,~\cite[Ch. 14]{ant}) is one of the
most basic tools in the analytic theory of modular forms on congruence
subgroups of $\SL(2,\Zz)$. One of its simplest consequences, which
explains its usefulness, is that it provides the asymptotic
orthogonality of distinct Fourier coefficients for an orthonormal
basis in a space of cusp forms, when the analytic conductor is large
(e.g., when the weight or the level is large).  From the proof of the
Petersson formula, we see that this orthonormality is equivalent (on a
qualitative level) to the assertion that the $n$-th Fourier
coefficient of the $m$-th Poincar\'e series is essentially the
Kronecker symbol $\delta(m,n)$.
\par
In this note, we provide a direct ``soft'' proof of this fact in the
more general context of Siegel modular forms when the main parameter
is the weight $k$.  Although this is not sufficient to derive the
strongest applications (e.g., to averages of $L$-functions in the
critical strip), it provides at least a good motivation for the more
quantitative orthogonality relations required for those. And, as we
show in our paper~\cite{kst} concerning the local spectral
equidistribution of Satake parameters for certain families of Siegel
modular forms of genus $g=2$, the ``soft'' proof suffices to derive
some basic consequences, such as the analogue of ``strong
approximation'' for cuspidal automorphic representations, and the
determination of the conjectural ``symmetry type'' of the family. See
Corollary~\ref{cor-simple} for a simple example of this when $g=1$.

\par
\medskip
\par
\textbf{Acknowledgements.} Thanks to M. Burger for helpful remarks
concerning the geometry of the Siegel fundamental domain.

\section{Classical modular forms}

In this section, we explain the idea of our proof for classical
modular forms; we hope this will be useful as a comparison point in
the next section, especially for readers unfamiliar with Siegel
modular forms. Let $k\geq 2$ be an even integer, $m\geq 1$ an
integer. The $m$-th Poincar\'e series of weight $k$ is defined by
$$
P_{m,k}(z)=\sum_{\gamma\in \Gamma_{\infty}\backslash \Gamma}{
(cz+d)^{-k}e(m\gamma\cdot z)
},
$$
where $\Gamma=SL(2,\Zz)$, acting on the Poincar\'e upper half-plane
$\Hh$, 
$$
\Gamma_{\infty} =
\Bigl\{\pm\begin{pmatrix}1&n\\0&1\end{pmatrix}: n\in \Zz \Bigr\}
$$
is the stabilizer of the cusp at infinity, and we write
$$
\gamma = \begin{pmatrix}a&b\\c&d \end{pmatrix},\quad\quad
(a,b,c,d)\in\Zz^4.
$$
\par
It is well known that for $k\geq 4$, $m\geq 1$, this series converges
absolutely and uniformly on compact sets, and that it defines a cusp
form of weight $k$ for $\Gamma=\SL(2,\Zz)$. We denote by $p_{m,k}(n)$,
$n\geq 1$, the Fourier coefficients of this Poincar\'e series, so that
$$
P_{m,k}(z)=\sum_{n\geq 1}{p_{m,k}(n)e(nz)}
$$
for all $z\in \Hh$.

\begin{proposition}[Asymptotic orthogonality of Fourier coefficients
  of Poincar\'e series]\label{pr-k}
  With notation as above, for fixed $m\geq 1$, $n\geq 1$, we
  have
$$
\lim_{k\ra +\infty}{p_{m,k}(n)}=\delta(m,n).
$$
\end{proposition}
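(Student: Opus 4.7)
The strategy is to extract $p_{m,k}(n)$ via Fourier inversion on a horizontal line at a fixed height $y>1$, peel off the contribution of the trivial coset in $\Gamma_{\infty}\backslash\Gamma$, and then show that what remains vanishes as $k\to+\infty$ by dominated convergence. The whole point is to avoid any unfolding into Kloosterman sums or Bessel asymptotics.

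For any $y>0$, periodicity and Fourier inversion give
\[
p_{m,k}(n) = e^{2\pi ny}\int_{0}^{1} P_{m,k}(x+iy)\,e(-nx)\,dx.
\]
Write $P_{m,k}(z)=e(mz)+R_{m,k}(z)$, where $R_{m,k}$ collects the terms with $c\neq 0$. A direct computation shows that the contribution of $e(mz)$, after multiplication by $e^{2\pi ny}$, is exactly $e^{2\pi(n-m)y}\delta(m,n)=\delta(m,n)$. It therefore suffices to show
\[
e^{2\pi ny}\int_{0}^{1} R_{m,k}(x+iy)\,e(-nx)\,dx\longrightarrow 0 \qquad(k\to+\infty),
\]
which I intend to do with $y$ fixed $>1$ (say $y=2$), and $e^{2\pi ny}$ treated merely as a constant.

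For every representative $\gamma$ with $c\neq 0$ and $\gcd(c,d)=1$, the inequality $|cz+d|^{2}=(cx+d)^{2}+c^{2}y^{2}\geq y^{2}>1$, combined with $|e(m\gamma z)|\leq 1$, gives
\[
\bigl|(cz+d)^{-k}\,e(m\gamma z)\bigr|\leq |cz+d|^{-k}\leq |cz+d|^{-4}\qquad (k\geq 4),
\]
so each such term tends to $0$ as $k\to+\infty$, and is uniformly dominated by the corresponding weight-$4$ term. The majorising series $\sum_{c\neq 0,\,\gcd(c,d)=1}|cz+d|^{-4}$ is absolutely convergent (this is the standard absolute convergence at weight $4$) and continuous in $x\in[0,1]$ at fixed $y>1$, so dominated convergence yields $R_{m,k}(x+iy)\to 0$ pointwise in $x$, and a second application to the bounded $x$-integral over $[0,1]$ gives the limit.

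I do not anticipate any serious obstacle: the argument is purely qualitative. The only choice that really matters is to work at $y>1$, which simultaneously drives each individual non-trivial term to zero and places the sum safely inside the region of absolute convergence of the weight-$4$ series; this is precisely what the soft limit requires, and it is the analogue I expect to have to generalise in the Siegel setting, where the analogue of ``$y>1$'' will presumably be a suitable neighbourhood of infinity in the Siegel fundamental domain.
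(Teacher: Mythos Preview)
Your proposal is correct and follows essentially the same route as the paper: Fourier inversion on a horizontal segment at height $y_0>1$, the pointwise bound $|cz+d|^2\geq c^2y_0^2>1$ for $c\neq 0$ to kill the nontrivial cosets, and dominated convergence against the weight-$4$ majorant. The only cosmetic differences are that the paper integrates over $[-1/2,1/2]$ and keeps the factor $e^{2\pi n y_0}$ hidden inside $e(-nz)$, and that it phrases the limit as $P_{m,k}(z)\to e(mz)$ rather than isolating the remainder $R_{m,k}$ explicitly.
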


\begin{proof}
The idea is to use the definition of Fourier coefficients as
$$
p_{m,k}(n)=\int_{U}{P_{m,k}(z)e(-nz)dz}
$$
where $U$ is a suitable horizontal interval of length $1$ in $\Hh$,
and $dz$ is the Lebesgue measure on such an interval; we then let
$k\ra +\infty$ under the integral sign, using the definition of the
Poincar\'e series to understand that limit.
\par
We select
$$
U=\{x+iy_0 \,\mid\, |x|\leq 1/2\}
$$
for some fixed $y_0>1$. The Lebesgue measure is then of course $dx$. 
\par
Consider a term 
$$
(cz+d)^{-k}e(m\gamma \cdot z)
$$
in the Poincar\'e series as $k\ra +\infty$. We have
$$
\Bigl|(cz+d)^{-k}e(m\gamma \cdot z)
\Bigr|\leq |cz+d|^{-k}
$$
for all $z \in \Hh$ and $\gamma\in \SL(2,\Zz)$, since $m\geq 0$ and
$\gamma\cdot z\in \Hh$. But for $z\in U$, we find
\begin{equation}\label{eq-j}
|cz+d|^2=(cx+d)^2+c^2y_0^2\geq c^2y_0^2.
\end{equation}
\par
If $c\not=0$, since $c$ is an integer, the choice of $y_0>1$ leads to
$c^2y_0^2>1$, and hence
$$
\Bigl|(cz+d)^{-k}e(m\gamma \cdot z)
\Bigr|\ra 0
$$
as $k\ra +\infty$, uniformly for $z\in U$ and $\gamma\in \Gamma$ with
$c\not=0$. On the other hand, if $c=0$, we have $\gamma\in
\Gamma_{\infty}$; this means this corresponds to a single term which
we take to be $\gamma=\mathrm{Id}$, and we then have
$$
(cz+d)^{-k}e(m\gamma \cdot z)=e(mz)
$$
for all $k$ and $z\in U$. 
\par
Moreover, all this shows also that
$$
\Bigl|(cz+d)^{-k}e(m\gamma \cdot z)
\Bigr|\leq |cz+d|^{-4}
$$
for $k\geq 4$ and $\gamma\in \Gamma_{\infty}\backslash\Gamma$. Since
the right-hand converges absolutely and uniformly on compact sets, we
derive by dominated convergence that
$$
P_{m,k}(z)\ra e(mz)
$$
for all $z\in U$. The above inequality gives further
$$
|P_{m,k}(z)|\leq \sum_{\gamma\in\Gamma_{\infty}\backslash
  \Gamma}{|cz+d|^{-4}}
$$
for $k\geq 4$ and $z\in U$. Since $U$ is compact, we can integrate by
dominated convergence again to obtain
$$
\int_{U}{P_{m,k}(z)e(-nz)dz}\longrightarrow \int_{U}{e((m-n)z)dz}
=\delta(m,n)
$$
as $k\ra +\infty$.
\end{proof}

It turns out that the same basic technique works for the other most
important parameter of cusp forms, the level. For $q\geq 1$ and $m\geq
1$ integers, let now
$$
P_{m,q}(z)=\sum_{\gamma\in \Gamma_{\infty}\backslash \Gamma_0(q)}{
(cz+d)^{-k}e(m\gamma\cdot z)
}
$$
be the $m$-th Poincar\'e series of weight $k$ for the Hecke group
$\Gamma_0(q)$, and let $p_{m,q}(n)$ denote its Fourier coefficients.

\begin{proposition}[Orthogonality with respect to the level]\label{pr-q}
  With notation as above, for $k\geq 4$ fixed, for any fixed $m$ and
  $n$, we have
$$
\lim_{q\ra +\infty}{p_{m,q}(n)}=\delta(m,n).
$$
\end{proposition}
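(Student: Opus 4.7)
The approach is to imitate the proof of Proposition~\ref{pr-k}, but to replace the weight-aspect decay of $|cz+d|^{-k}$ with the arithmetic fact that $c \equiv 0 \mods{q}$ for every $\gamma \in \Gamma_0(q)$. I would again start from
$$
p_{m,q}(n) = \int_U P_{m,q}(z) e(-nz) dz
$$
with $U = \{x + iy_0 \mid |x| \leq 1/2\}$ for some fixed $y_0 > 1$, and isolate the unique term with $c=0$ (the coset of $\gamma = \mathrm{Id}$, which contributes $e(mz)$), whose integral against $e(-nz)$ is already $\delta(m,n)$. It then remains to show that
$$
R_{m,q}(z) := \sum_{\substack{\gamma \in \Gamma_\infty \backslash \Gamma_0(q) \\ c \neq 0}} (cz+d)^{-k} e(m\gamma \cdot z)
$$
integrates to $0$ over $U$ as $q \ra +\infty$.

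The key observation is now arithmetic rather than analytic: for any fixed coset represented by $\gamma \in \Gamma_\infty \backslash \Gamma$ with bottom row $(c,d)$ and $c \neq 0$, membership of $\gamma$ in $\Gamma_0(q)$ requires $q \mid c$, which fails as soon as $q > |c|$. Hence each individual term of $R_{m,q}(z)$ vanishes for all sufficiently large $q$. Since
$$
\Bigl| (cz+d)^{-k} e(m\gamma \cdot z) \Bigr| \leq |cz+d|^{-k}
$$
for all $z \in \Hh$, and since for $k \geq 4$ the series $\sum_{\gamma \in \Gamma_\infty \backslash \Gamma} |cz+d|^{-k}$ converges absolutely and uniformly on compact subsets of $\Hh$, we may regard $R_{m,q}$ as a sum indexed by the $q$-independent set $\Gamma_\infty \backslash \Gamma$, with the terms lying outside $\Gamma_0(q)$ (or with $c=0$) set to zero. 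Dominated convergence then yields $R_{m,q}(z) \ra 0$ for every $z \in U$, and a second application on the compact set $U$ gives $\int_U R_{m,q}(z) e(-nz) dz \ra 0$, which completes the proof.

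The only mild subtlety will be the insistence on dominating $R_{m,q}$ by a sum over the full coset space $\Gamma_\infty \backslash \Gamma$ rather than over $\Gamma_\infty \backslash \Gamma_0(q)$, so that the dominating function does not depend on $q$. Once this bookkeeping is in place, the argument is a direct transposition of the proof of Proposition~\ref{pr-k}, with the arithmetic constraint $q \mid c$ playing the role previously taken by the exponent $k$.
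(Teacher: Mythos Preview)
Your proposal is correct and follows essentially the same approach as the paper. The paper makes the embedding into the $q$-independent index set explicit by writing $P_{m,q}(z)=\sum_{\gamma\in\Gamma_\infty\backslash\Gamma}\Delta_q(\gamma)(cz+d)^{-k}e(m\gamma\cdot z)$ with $\Delta_q$ the indicator of $q\mid c$, which is exactly the bookkeeping device you describe; the two applications of dominated convergence and the identification of the surviving $c=0$ term are then identical.
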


\begin{proof}
We start with the integral formula
$$
p_{m,q}(n)=\int_{U}{P_{q,m}(z)e(-nz)dz}
$$
as before. To proceed, we observe that $\Gamma_{\infty}\backslash
\Gamma_0(q)$ is a subset of $\Gamma_{\infty}\backslash \Gamma$, and
hence we can write
$$
P_{m,q}(z)=\sum_{\gamma\in \Gamma_{\infty}\backslash
  \Gamma}{\Delta_q(\gamma) (cz+d)^{-k}e(m\gamma\cdot z)
},
$$
where
$$
\Delta_q\Bigl(\begin{pmatrix}a&b\\c&d\end{pmatrix}
\Bigr)=
\begin{cases}
1&\text{ if } c\equiv 0\mods{q},\\
0&\text{ otherwise.}
\end{cases}
$$
\par
We let $q\ra +\infty$ in each term of this series. Clearly, we have
$\Delta_q(\gamma)=0$ for all $q>c$, unless if $c=0$, in which case
$\Delta_q(\gamma)=1$. Thus
$$
\Delta_q(\gamma) (cz+d)^{-k}e(m\gamma\cdot z)\ra 0
$$
if $c\not=0$, and otherwise
$$
\Delta_q(\gamma) (cz+d)^{-k}e(m\gamma\cdot z)=e(mz).
$$
\par
Moreover, we have obviously
$$
\Bigl|\Delta_q(\gamma) (cz+d)^{-k}e(m\gamma\cdot z)
\Bigr|\leq |cz+d|^{-k}
$$
and since $k\geq 4$, this defines an absolutely convergent series for
all $z$. We therefore obtain
$$
P_{m,q}(z)\ra e(mz)
$$
for any $z\in U$. Finally, the function
$$
z\mapsto \sum_{\gamma\in \Gamma_{\infty}\backslash \Gamma}{|cz+d|^{-k}}
$$
being integrable on $U$, we obtain the result after integrating.
\end{proof}

Here is a simple application to show that such qualitative statements
are not entirely content-free:

\begin{corollary}[``Strong approximation'' for $GL(2)$-cusp forms]
\label{cor-simple}
  Let $\Aa$ be the ad\`ele ring of $\Qq$. For each irreducible,
  cuspidal, automorphic representation $\pi$ of $GL(2,\Aa)$ and each
  prime $p$, let $\pi_p$ be the unitary, admissible representation of
  $GL(2,\Qq_p)$ that is the local component of $\pi$ at $p$. Then, for
  any finite set of primes $S$, as $\pi$ runs over the cuspidal
  spectrum of $GL(2,\Aa)$ unramified at primes in $S$, the set of
  tuples $(\pi_p)_{p\in S}$ is dense in the product over $p\in S$ of
  the unitary tempered unramified spectrum $X_p$ of $GL(2,\Qq_p)$.
\end{corollary}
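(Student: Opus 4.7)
The plan is to use Proposition~\ref{pr-q} to derive a weighted ``vertical'' equidistribution of the Satake parameters at primes in $S$ for Hecke eigenforms on $\Gamma_0(q)$ as $q\to\infty$ through integers coprime to $\prod_{p\in S}p$; since the limiting distribution will be a product of Sato--Tate measures (which has full support on $\prod_{p\in S}X_p$), density follows from the Portmanteau theorem. Fix an even weight $k\geq 4$. For each $q$ coprime to $\prod_{p\in S}p$, select an orthonormal basis $\mathcal{B}_q$ of $S_k(\Gamma_0(q))$ for the Petersson inner product consisting of common eigenfunctions of the Hecke operators $T_p$ for $p\nmid q$ (available by Atkin--Lehner theory). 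For $f\in\mathcal{B}_q$, set $\tilde f=f/a_f(1)$ (so $|a_f(1)|^2=1/\|\tilde f\|^2$), and for $p\in S$ let $\nu_p(f)=\lambda_f(p)/p^{(k-1)/2}\in[-2,2]$ (Deligne); under the identification $X_p\cong[-2,2]$ via $\alpha+\alpha^{-1}$, $\nu_p(f)$ is the Satake parameter at $p$ of the cuspidal automorphic representation attached to the underlying newform, which is unramified at $S$. Unfolding yields $P_{m,q}=\tfrac{(k-2)!}{(4\pi m)^{k-1}}\sum_{f\in\mathcal{B}_q}\overline{a_f(m)}\,f$, whence
$$p_{m,q}(n)=\frac{(k-2)!}{(4\pi m)^{k-1}}\sum_{f\in\mathcal{B}_q}\frac{\overline{\lambda_f(m)}\,\lambda_f(n)}{\|\tilde f\|^2}\qquad\text{for }(mn,q)=1,$$
and Proposition~\ref{pr-q} then gives $\sum_{f\in\mathcal{B}_q}\overline{\lambda_f(m)}\lambda_f(n)/\|\tilde f\|^2 \longrightarrow \tfrac{(4\pi m)^{k-1}}{(k-2)!}\delta(m,n)$ as $q\to\infty$.

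Form the probability measure $\mu_q=M_q^{-1}\sum_{f\in\mathcal{B}_q}\|\tilde f\|^{-2}\delta_{(\nu_p(f))_{p\in S}}$ on the compact space $X=\prod_{p\in S}[-2,2]$, with $M_q=\sum_{f\in\mathcal{B}_q}\|\tilde f\|^{-2}$; the case $m=n=1$ shows $M_q$ tends to the positive constant $(4\pi)^{k-1}/(k-2)!$. Taking $n=1$ and $m=\prod_{p\in S}p^{j_p}$, the multiplicativity of Hecke eigenvalues on coprime arguments and the identity $\lambda_f(p^j)/p^{j(k-1)/2}=U_j(\nu_p(f)/2)$ (where $U_j$ is the Chebyshev polynomial of the second kind) turn the displayed limit into
$$\int_X\prod_{p\in S}U_{j_p}(\nu_p/2)\,d\mu_q\;\longrightarrow\;\begin{cases}1 & \text{if all } j_p=0,\\ 0 & \text{otherwise,}\end{cases}$$
which are exactly the Chebyshev moments of the product Sato--Tate measure $\mu_{ST}^{\otimes S}$ on $X$. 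Since the polynomials $\prod U_{j_p}(x_p/2)$ span a dense subalgebra of $C(X)$ by Stone--Weierstrass on the compact $X$, the method of moments yields $\mu_q\Rightarrow\mu_{ST}^{\otimes S}$. As $\mu_{ST}^{\otimes S}$ has full support on $X\cong\prod_{p\in S}X_p$, the Portmanteau theorem gives $\liminf_q\mu_q(\Omega)>0$ for every non-empty open $\Omega\subset X$; hence for $q$ sufficiently large some $f\in\mathcal{B}_q$ has $(\nu_p(f))_{p\in S}\in\Omega$, and the associated cuspidal automorphic representation, unramified at $S$, has local components realizing the required Satake tuple.

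The main obstacle will be the careful bookkeeping of normalizations: aligning the Petersson-orthonormal basis $\mathcal{B}_q$ with arithmetically-normalized Hecke eigenforms, checking that the constant $(k-2)!/(4\pi m)^{k-1}$ in the unfolding formula is genuinely $q$-independent so that Proposition~\ref{pr-q} yields a clean limit of the weighted Hecke-eigenvalue sum, and invoking Deligne's Ramanujan bound to place $\nu_p(f)$ in the compact interval $[-2,2]$ so that $\mu_q$ lives on $X$; once these verifications are in hand, the moment-method and Portmanteau arguments are routine.
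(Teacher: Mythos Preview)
Your argument is correct and structurally identical to the paper's: Petersson unfolding expresses $p_{m,\bullet}(n)$ as a weighted sum of products of Hecke eigenvalues, the qualitative limit of the Poincar\'e Fourier coefficients forces the Chebyshev moments of the weighted Satake measures to match those of the product Sato--Tate measure, and density then follows from the full support of the latter. The one genuine difference is the choice of parameter: the paper invokes Proposition~\ref{pr-k} and lets the weight $k\to\infty$ at level~$1$, while you invoke Proposition~\ref{pr-q} and let the level $q\to\infty$ (through integers coprime to $\prod_{p\in S}p$) at a fixed even weight $k\geq 4$. Your route therefore yields the slightly sharper conclusion that cuspidal representations of a \emph{fixed} archimedean type already suffice for density. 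Two minor remarks: first, some basis elements $f\in\mathcal{B}_q$ may have $a_f(1)=0$ (oldforms of the shape $g(dz)$ with $d>1$), so $\tilde f$ is undefined for these; this is harmless since such $f$ contribute $0$ to $\sum_f\overline{a_f(m)}a_f(n)$ when $(mn,q)=1$, and your measure $\mu_q$ simply assigns them weight $|a_f(1)|^2=0$. Second, the appeal to Deligne is convenient but not strictly needed---as the paper does, one may run the moment argument on the larger compact $\prod_{p\in S}[-2\sqrt{p},2\sqrt{p}]$ and still conclude density in $\prod X_p$ from the support of the limiting measure.
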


\begin{proof}
  This is already known, due to Serre~\cite{serre} (if one uses
  holomorphic forms) or Sarnak~\cite{sarnak} (using Maass forms), but
  we want to point out that this is a straightforward consequence of
  Proposition~\ref{pr-k}; for more details, see the Appendix
  to~\cite{kst}. We first recall that the part of unitary unramified
  spectrum of $GL(2,\Qq_p)$ with trivial central character can be
  identified with $[-2\sqrt{p},2\sqrt{p}]$ via the map sending Satake
  parameters $(\alpha,\beta)$ to $\alpha+\beta$. The subset $X_p$ can
  then be identified with $[-2,2]$, and for $\pi=\pi(f)$ attached to a
  cuspidal primitive form unramified at $p$, the local component
  $\pi_p(f)$ corresponds to the normalized Hecke eigenvalue $\lambda_f(p)$.
\par
Now the (well-known) point is that for any integer of the form
$$
m=\prod_{p\in S}{p^{n(p)}}\geq 1,
$$
and any cusp form $f$ of weight $k$ with Fourier coefficients
$n^{(k-1)/2}\lambda_f(n)$, the characteristic property
$$
\langle f,P_{m,k}(\cdot)\rangle=\frac{\Gamma(k-1)}{(4\pi m)^{k-1}}
m^{(k-1)/2}\lambda_f(m)
$$
of Poincar\'e series (see, e.g.,~\cite[Lemma 14.3]{ant}) implies that
$$
p_{m,k}(1)=\sum_{f\in H_k}{\omega_f\lambda_f(m)} =\sum_{f\in
  H_k}{\omega_f\prod_{p\in S}{U_{n(p)}(\lambda_f(p))}},\quad
\omega_f=\frac{\Gamma(k-1)}{(4\pi)^{k-1}}\frac{1}{\|f\|^2},
$$
where $H_k$ is the Hecke basis of weight $k$ and level $1$, $U_n$
denotes Chebychev polynomials, and $\|f\|$ is the Petersson norm.
Because the linear combinations of Chebychev polynomials are dense in
$C([-2p^{1/2},2p^{1/2}])$ for any prime $p$, the fact that
$$
\lim_{k\ra +\infty}{p_{m,k}(1)}=\delta(m,1)=
\begin{cases}
1&\text{ if all $n(p)$ are zero,}\\
0&\text{ otherwise,}
\end{cases}
$$
(given by Proposition~\ref{pr-k}) shows, using the Weyl
equidistribution criterion, that $(\pi_p(f))_{f\in H_k}$, when counted
with weight $\omega_f$, becomes equidistributed as $k\ra +\infty$ with
respect to the product of Sato-Tate measures over $p\in S$. Since each
factor has support equal to $[-2,2]=X_p$, this implies trivially the
result.
\end{proof}

\section{Siegel modular forms}

We now proceed to generalize the previous result to Siegel cusp forms;
although some notation will be recycled, there should be no
confusion. For $g\geq 1$, let $\Hh_g$ denote the Siegel upper
half-space of genus $g$
$$
\Hh_g=\{z=x+iy\in M(g,\Cc)\,\mid\, \T{z}=z,\quad y
\text{ is positive definite}\},
$$
on which the group $\Gamma_g=\Sp(2g,\Zz)$ acts in the
usual way 
$$
\gamma\cdot z=(az+b)(cz+d)^{-1}
$$
(see, e.g.,~\cite[Ch. 1]{klingen} for such basic facts; we always write
$$
\gamma=\begin{pmatrix}
a&b\\
c& d
\end{pmatrix}
$$
for symplectic matrices, where the blocks are themselves $g\times g$
matrices). Let $A_g$ denote the set of symmetric, positive-definite
matrices in $M(g,\Zz)$ with integer entries on the main diagonal and
half-integer entries off it. Further, let
$$
\Gamma_{\infty} =
\Bigl\{\pm\begin{pmatrix}1&s\\0&1\end{pmatrix}: s \in M(g,\Zz),\ s=\T{s}
\Bigr\}.
$$
\par
For $k\geq 2$, even,\footnote{\ Forms of odd weight $k$ do exist if
  $g$ is even, but behave a little bit differently, and we restrict to
  $k$ even for simplicity.} and a matrix $s\in
A_g$, the Poincar\'e series $\mathcal{P}_{s,k}$ is defined by
$$
\mathcal{P}_{s,k}(z)=\sum_{\gamma\in \Gamma_{\infty}\backslash \Gamma_g}{
\det(cz+d)^{-k}e(\Tr(s\ (\gamma\cdot z)))
}
$$
for $z$ in $\Hh_g$. This series converges absolutely and uniformly on
compact sets of $\Hh_g$ for $k>2g$; indeed, as shown by
Maass~\cite[(32), Satz 1]{maass}), the series
$$
\mathcal{M}_{s,k}(z)=\sum_{\gamma\in \Gamma_{\infty}\backslash
  \Gamma_g}{ |\det(cz+d)|^{-k}\exp(-2\pi\Tr(s\ \Imag(\gamma\cdot z)))}
$$
which dominates it termwise converges absolutely and uniformly on
compact sets (see also~\cite[p. 90]{klingen}; note that, in contrast
with the case of $SL(2,\Zz)$, one can not ignore the exponential
factor here to have convergence).  The Poincar\'e series
$\mathcal{P}_{s,k}$ is then a Siegel cusp form of weight $k$ for
$\Gamma_g$. Therefore, it has a Fourier expansion
$$
\mathcal{P}_{s,k}(z)=\sum_{t\in A_g}{p_{s,k}(t)e(\Tr(tz))},
$$
which converges absolutely and uniformly on compact subsets of $\Hh_g$.

\begin{theorem}[Orthogonality for Siegel-Poincar\'e
  series]\label{th:siegelortho}
  With notation as above, for any fixed $s$, $t\in A_g$, we have
$$
\lim_{k\ra +\infty}{p_{s,k}(t)}=\delta'(s,t)\frac{|\Aut(s)|}{2},
$$
where the limit is over even weights $k$, $\delta'(s,t)$ is the
Kronecker delta for the $\GL(g,\Zz)$-equivalence classes of $s$ and
$t$, and where $\Aut(s)=O(s,\Zz)$ is the finite group of integral
points of the orthogonal group of the quadratic form defined by $s$.
\end{theorem}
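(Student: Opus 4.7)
The plan is to adapt the classical strategy: write $p_{s,k}(t)$ as an integral of $\mathcal{P}_{s,k}$ against $e(-\Tr(tx))$ over a horizontal slice at fixed imaginary part $Y$, split the series into its $c=0$ and $c\ne 0$ parts, and pass to the limit $k\ra+\infty$ under the integral sign. Letting $D$ be a fundamental domain for the translations $x\mapsto x+s'$ by $s'\in M(g,\Zz)_{\mathrm{sym}}$, and noting that $A_g$ is exactly the dual lattice of $M(g,\Zz)_{\mathrm{sym}}$ under the pairing $(\mu,x)\mapsto\Tr(\mu x)$, Fourier inversion on this torus gives
\[
p_{s,k}(t)=\frac{e^{2\pi\Tr(tY)}}{\vol(D)}\int_D \mathcal{P}_{s,k}(x+iY)\,e(-\Tr(tx))\,dx.
\]
I would take $Y=\lambda I_g$ with $\lambda$ large enough that the slice $\{x+iY:x\in D\}$ lies in the Siegel fundamental domain $\mathcal{F}_g$, where Siegel's inequality $|\det(cz+d)|\ge 1$ holds with equality exactly along the $c=0$ walls; this is where the geometry of $\mathcal{F}_g$, for which Burger's help is acknowledged, enters.

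For the $c=0$ contribution, the cosets $\Gamma_\infty\backslash P_\infty$ of the Siegel parabolic are parameterized by $a\in\{\pm1\}\backslash\GL(g,\Zz)$ via representatives $\gamma=\begin{pmatrix}a&0\\0&\T{a}^{-1}\end{pmatrix}$, giving $\gamma\cdot z=a z\,\T a$ and $\det(cz+d)^{-k}=\det(a)^{k}=1$ since $k$ is even. Regrouping by the $\GL(g,\Zz)$-orbit $s'=\T a\,s\,a$, whose fiber modulo $\pm 1$ has size $|\Aut(s)|/2$,
\[
\mathcal{P}_{s,k}^{(0)}(z)=\frac{|\Aut(s)|}{2}\sum_{s'\sim s}e(\Tr(s'z)).
\]
Inserting this into the integral formula, Fourier orthogonality on $D$ singles out the unique term with $s'=t$ (which exists precisely when $s\sim t$), and the factor $e^{2\pi\Tr(tY)}$ cancels the $e^{-2\pi\Tr(s'Y)}$ coming from $e(\Tr(s'\cdot iY))$, producing exactly the contribution $\frac{|\Aut(s)|}{2}\delta'(s,t)$ to $p_{s,k}(t)$.

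It remains to prove the $c\ne 0$ contribution vanishes as $k\ra+\infty$. For $z=x+i\lambda I_g$, the fact that $\Imag(z)$ is scalar together with the symplectic identity $c\,\T d=d\,\T c$ forces the imaginary cross-terms in the expansion of $(cz+d)\,\overline{\T{(cz+d)}}$ to cancel, leaving the real positive definite identity
\[
(cz+d)\,\overline{\T{(cz+d)}}=\lambda^{2}\,c\,\T c+(cx+d)\,\T{(cx+d)}.
\]
A Schur-complement/Cauchy--Binet analysis of this determinant, combined with the integrality of $c,d$ and the rank-$g$ condition on $(c,d)$ imposed by symplecticity, should yield a uniform strict bound $|\det(cz+d)|>1$ for all $x\in D$ and all $\gamma$ with $c\ne 0$, once $\lambda$ is chosen large enough. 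Granted this, each term $\det(cz+d)^{-k}e(\Tr(s\gamma\cdot z))$ tends pointwise to $0$ and, for $k\ge k_0>2g$, is bounded in absolute value by the corresponding term of Maass's absolutely convergent majorant $\mathcal{M}_{s,k_0}$; two applications of dominated convergence, first for the sum over $\gamma$ and then for the integral over the compact set $D$, then conclude. I expect the principal obstacle to be precisely this uniform lower bound: the naive estimate $\lambda^{2g}|\det c|^{2}\ge\lambda^{2g}$ works only when $c$ has full rank, and for smaller $\rank(c)$ one must extract a leading order $\lambda^{2\rank(c)}$ times a product of integer-valued invariants (the pseudo-determinant of $c\,\T c$ and a Gram determinant of the projection of $d$ onto $\ker(\T c)\cap\Zz^{g}$), each of which is at least $1$.
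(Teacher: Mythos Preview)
Your overall architecture matches the paper's: the same integral representation over a horizontal slice $x+i\lambda I_g$, the same identification of the $c=0$ terms with the $\GL(g,\Zz)$-orbit of $s$ (yielding $\tfrac{|\Aut(s)|}{2}\delta'(s,t)$), and the same two-stage dominated convergence using Maass's majorant $\mathcal{M}_{s,k_0}$. The one place your argument is not yet a proof is exactly the place you flag as the ``principal obstacle'': the uniform strict bound $|\det(cz+d)|>1$ for all $\gamma$ with $c\ne 0$ and all $x$ in the compact slice.

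Your proposed route to this bound --- a Cauchy--Binet/Schur-complement expansion of $\det(\lambda^2 c\,\T c+(cx+d)\,\T{(cx+d)})$, extracting a leading term $\lambda^{2\rank(c)}$ times integer-valued invariants --- is plausible in spirit, but the uniformity over \emph{infinitely many} cosets $\gamma$ is not addressed by your sketch; the lower-order coefficients depend on $x$ and on $\gamma$, and you have not shown they cannot conspire to push the determinant below $1$ for some sequence of $\gamma$'s even when $\lambda$ is large. The paper sidesteps this entirely by invoking a nontrivial theorem of Siegel: the fundamental domain $\mathcal{F}_g$ is cut out by \emph{finitely many} inequalities $|\det(cz+d)|\ge 1$, $\gamma\in C_g$, and a point satisfying all of them \emph{strictly} automatically satisfies the strict inequality for every $\gamma$ with $c\ne 0$. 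Combined with Siegel's observation that $\alpha\mapsto|\det(c(x+i\alpha)+d)|^2$ is a nonconstant polynomial in $\alpha^2$ with nonnegative coefficients (hence tends to $+\infty$ uniformly for $x$ in the compact $\mathcal{U}_g$ and $\gamma$ in the finite set $C_g$), this gives the required $\lambda$ immediately. You allude to placing the slice inside $\mathcal{F}_g$, but you do not invoke this finiteness, and without it your direct estimate remains a program rather than a proof.
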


This result suggests to define the Poincar\'e series with an
additional constant factor $2/|\Aut(s)|$, in which case this theorem
is exactly analogous to Proposition~\ref{pr-k}. And indeed, this is
how Maass defined them~\cite{maass}.

\begin{proof}
We adapt the previous argument, writing first
$$
p_{s,k}(t)=\int_{U_g}{\mathcal{P}_{s,k}(z)e(-\Tr(tz))dz}
$$
where $U_g=U_g(y_0)$ will be taken to be the (compact) set of matrices
$$
U_g(y_0)=\mathcal{U}_g+iy_0\mathrm{Id},
$$
for some real number $y_0>1$ to be selected later, where
$$
\mathcal{U}_g=\{x\in M(g,\Rr)\,\mid\, x\text{ symmetric and }
|x_{i,j}|\leq 1/2\text{ for all } 1\leq i,j\leq g\} \;
$$
the measure $dz$ is again Lebesgue measure.
\par
Before proceeding, we first recall that
\begin{equation}\label{eq-exp-neg}
|e(\Tr(s\gamma \cdot z))|\leq 1
\end{equation}
for all $s\in A_g$, $\gamma\in \Gamma_g$ and $z\in \Hh_g$. Indeed,
since $s$ is a real matrix, we have
$$
|e(\Tr(s\gamma \cdot z))|=\exp(-2\pi\Tr(s\Imag(\gamma \cdot z)))
$$
and the result follows from the fact that
$$
\Tr(sy)\geq 0
$$
for any $s\in A_g$ and $y$ positive definite. To see the latter, we
write $y=\T{q}q$ for some matrix $q$, and we then have
$$
sy=s\T{q}q=q^{-1}tq
$$
with $t=qs\T{q}$; then $t$ is still positive, while $\Tr(sy)=\Tr(t)$,
so $\Tr(sy)\geq 0$.
\par
We then have the following Lemma:\footnote{\ This statement is used to
  replace the inequality~(\ref{eq-j}), which has no obvious analogue
  when $g\geq 2$.}

\begin{lemma}\label{lemma-strict} 
For any integer $g\geq 1$, there exists a real number $y_0>1$,
  depending only on $g$, such that for any $\gamma\in\Gamma_g$ written
$$
\gamma=\begin{pmatrix}a & b\\
c&d
\end{pmatrix},\quad (a,b,c,d)\in M(g,\Zz),
$$
with $c\not=0$ and for all  $z\in U_g(y_0)$, we have
\begin{equation}\label{eq-strict}
|\det(cz+d)|>1 \end{equation}
whereas if $c=0$, we have $|\det(cz+d)|=1$.
\end{lemma}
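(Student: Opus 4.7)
The plan is to split into the trivial case $c=0$ and the main case $c\neq 0$. For $c=0$, the symplectic identity $a\T{d}=I_g$ gives $\det d=\pm 1$ and hence $|\det(cz+d)|=|\det d|=1$.

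For $c\neq 0$, I would first turn $|\det(cz+d)|^2$ into a real-valued determinant. Using $\overline{cz+d}=c\bar z+d$ together with the symplectic identity $c\T{d}=d\T{c}$, the imaginary cross terms in $(cz+d)\T{(c\bar z+d)}$ cancel and one obtains
\[
|\det(cz+d)|^2 = \det\bigl((cx+d)\T{(cx+d)}+y_0^{2}\,c\T{c}\bigr) = \det(NN^{t}),
\]
where $N=[\,cx+d\,\mid\,y_0 c\,]$ is the $g\times 2g$ horizontal concatenation. My next step would be to apply Cauchy--Binet to $\det(NN^{t})$, writing it as a polynomial in $y_0^{2}$ of degree $r:=\rank c$ with non-negative coefficients,
\[
|\det(cz+d)|^2 = \sum_{k=0}^{r} y_0^{2k}\,\tilde\beta_k(x),
\]
where $\tilde\beta_k(x)$ is the sum of squared $g\times g$ minors of $[\,cx+d\,\mid\,c\,]$ that take $k$ columns from $c$ and $g-k$ from $cx+d$ (terms with $k>r$ vanish because more than $r$ columns of $c$ are always linearly dependent).

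The crucial observation---which I expect to be the main technical point---is that the leading coefficient $\tilde\beta_r(x)$ is independent of $x$ and at least $1$. Independence comes from a column-operation argument: in each minor in question, either the $r$ chosen columns of $c$ span $\mathrm{Col}\,c$, so every column of $cx$ lies in their linear span and can be eliminated from the first $g-r$ columns without changing the determinant (effectively replacing $cx+d$ by $d$), or those columns fail to span $\mathrm{Col}\,c$, in which case the $g\times g$ matrix has rank $<g$ and both determinants vanish. Hence $\tilde\beta_r(x)=\tilde\beta_r(0)$ is a sum of squared integer minors of $[\,d\,\mid\,c\,]$; the symplectic condition $\rank(c\mid d)=g$ then lets one pick $r$ columns of $c$ spanning $\mathrm{Col}\,c$ and $g-r$ columns of $d$ projecting to a basis of $\Rr^{g}/\mathrm{Col}\,c$, producing an invertible integer matrix whose squared determinant contributes at least $1$. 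Combining, for any $y_0>1$ one has $|\det(cz+d)|^{2}\geq\tilde\beta_r(x)\, y_0^{2r}\geq y_0^{2r}\geq y_0^{2}>1$ whenever $c\neq 0$, so any real $y_0>1$ satisfies the lemma.
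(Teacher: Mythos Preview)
Your proof is correct and takes a genuinely different, more direct route than the paper's. The paper invokes the Siegel fundamental domain $\mathcal{F}_g$: it uses Siegel's result that $\mathcal{F}_g$ is cut out by finitely many inequalities $|\det(cz+d)|\geq 1$ for $\gamma\in C_g$, observes (again following Siegel) that $\alpha\mapsto|\det(c(x+i\alpha)+d)|^2$ is a non-constant polynomial in $\alpha^2$ with non-negative coefficients, and then uses compactness of $\mathcal{U}_g$ together with finiteness of $C_g$ to find \emph{some} $y_0>1$ for which strict inequality holds on $U_g(y_0)$, concluding via the boundary description of $\mathcal{F}_g$. Your argument bypasses the fundamental domain entirely: the Cauchy--Binet expansion of $\det\bigl((cx+d)\T{(cx+d)}+y_0^2 c\T{c}\bigr)$ makes the polynomial structure explicit, and your observation that the leading coefficient $\tilde\beta_r(x)$ is independent of $x$ and is a positive integer (via the column-reduction argument and $\rank[c\mid d]=g$) yields the clean lower bound $|\det(cz+d)|^2\geq y_0^{2\rank c}$. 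This is both more elementary and strictly stronger: it shows that \emph{every} $y_0>1$ works for all $g$, which the paper explicitly leaves as an open question in a remark following the proof (verified for $g=1$, and only suggested by numerical experiments for $g=2$). The price you pay is having to carry out the minor-by-minor analysis by hand, whereas the paper can appeal to the existing structure theory of $\mathcal{F}_g$.
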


Assuming the truth of this lemma, we find that
\begin{equation}\label{eq-dominated}
|\det(cz+d)^{-k}e(\Tr(s\gamma \cdot z))|\leq |\det(cz+d)|^{-2g-1}
\exp(-2\pi\Tr(s\Imag(\gamma \cdot z)))
\end{equation}
for any $k>2g$, all $z\in U_g$ and $\gamma\in\Gamma_{\infty}\backslash
\Gamma_g$, and also that
\begin{equation}\label{eq-siegellimit}
\det(cz+d)^{-k}e(\Tr(s\gamma \cdot z))\lra 0\text{ as } k\ra +\infty,
\end{equation}
for all $z\in U_g$ and all $\gamma$ with $c\not=0$. On the other hand,
if $c=0$, we have
$$
\gamma=\begin{pmatrix}
a& 0\\
0& \T{a^{-1}}
\end{pmatrix},
$$
up to $\Gamma_{\infty}$-equivalence, where $a\in \GL(g,\Zz)$ and hence
\begin{align*}
\det(cz+d)^{-k}e(\Tr(s\gamma \cdot z))&=
e(\Tr(s az \T{a})))=
e(\Tr(az\T{a}s))\\
&=
e(\Tr(\T{a}saz))
=e(\Tr((a\cdot s) z))
\end{align*}
where $a\cdot s=\T{a}sa$ (we use here that $k$ is even).
\par
Using~\eqref{eq-dominated},~\eqref{eq-siegellimit} and the absolute
convergence of $\mathcal{M}_{s,2g+1}(z)$, we find that
$$
\mathcal{P}_{s,k}(z)\lra \sum_{a\in \GL(g,\Zz)/\pm1}{e(\Tr((a\cdot s)z))}
$$
as $k\ra +\infty$, for all $z\in U_g$. (The series converges as a
subseries of the Poincar\'e series.)
\par
Then we multiply by $e(-\Tr(tz))$ and integrate over $U_g$,
using~(\ref{eq-dominated}) and the fact that $\mathcal{M}_{s,2g+1}$ is
bounded on $U_g$ to apply the dominated convergence theorem, and
obtain
$$
p_{s,k}(t)\lra \sum_{a\cdot s=t}{1},
$$
a number which is either $0$, if $s$ and $t$ are not equivalent, or
has the same cardinality as $\Aut(s)/2$ if they are. This completes
the proof of Theorem~\ref{th:siegelortho}.
\end{proof}

We still need to prove Lemma~\ref{lemma-strict}. We are going to use
the description of the Siegel fundamental domain $\mathcal{F}_g$ for
the action of $\Gamma_g$ on $\Hh_g$. Precisely, $\mathcal{F}_g$ is the
set of $z \in \Hh_g$ satisfying all the following conditions:
\begin{enumerate}
\item For all $\gamma\in\Gamma_g$, we have
$$
|\det(cz+d)|\geq 1;
$$
\item The imaginary part $\Imag(z)$ is Minkowski-reduced;
\item The absolute value of all coefficients of $\Reel(z)$ are $\leq
  1/2$.
\end{enumerate}
\par
Siegel showed that the first condition can be weakened to a finite
list of inequalities (see, e.g.,~\cite[Prop. 3.3, p. 33]{klingen}):
there exists a finite subset $C_g\subset \Gamma_g$, such that $z \in
\Hh_g$ belongs to $\mathcal{F}_g$ if and only if $z$ satisfies (2),
(3) and
\begin{equation}\label{eq-finite}
  |\det(cz+d)|\geq 1\text{ for all } \gamma\in C_g\text{ with } c\not=0.
\end{equation}
\par
Moreover, if~(\ref{eq-finite}) holds with equality sign for some
$\gamma\in C_g$, then $z$ is in the boundary of $\mathcal{F}_g$; if
this is not the case, then $|\det(cz+d)|>1$ for all $\gamma\in
\Gamma_g$ with $c\not=0$.

\begin{proof}[Proof of Lemma~\ref{lemma-strict}]
  First, we show that if $y_0>1$ is chosen large enough, the matrix
  $iy_0\mathrm{Id}$ is in $\mathcal{F}_g$. The only condition that
  must be checked is~(\ref{eq-finite}) when $\gamma\in C_g$ satisfies
  $c\not=0$, since the other two are immediate (once the definition of
  Minkowski-reduced is known; it holds for $y_0\mathrm{Id}$ when
  $y_0\geq 1$). For this, we use the following fact, due to
  Siegel~\cite[Lemma 9]{siegel} (see also~\cite[Lemma 3.3,
  p. 34]{klingen}): for any fixed $z=x+iy\in\Hh_g$ and any
  $\gamma\in\Gamma_g$ with $c\not=0$, the function
$$
\alpha\mapsto |\det(c(x+i\alpha)+d)|^2
$$
is strictly increasing on $[0,+\infty[$ and has limit $+\infty$ as
$\alpha\ra +\infty$. Taking $z=i$, we find that
$$
\lim_{y\ra+\infty}{|\det(iyc+d)|}=+\infty
$$
for every $\gamma\in C_g$. In particular, since $C_g$ is finite, there
exists $y_0>1$ such that
$$
|\det(cz_0+d)|>1
$$
for $z_0=iy_0$ and $\gamma\in C_g$, which is~(\ref{eq-finite}) for
$iy_0$.
\par
Because $\mathcal{U}_g$ is compact, it is now easy to also extend this
to $z=x+iy_0$ with $x\in \mathcal{U}_g$. Precisely, for fixed
$\gamma\in \Gamma_g$ with $c\not=0$, the function
$$
\begin{cases}
  \mathcal{U}_g\times ]0,+\infty[ \ra \Rr\\
  (x,\alpha) \mapsto |\det(c(x+i\alpha)+d)|^2
\end{cases}
$$
is a polynomial in the variables $(x,\alpha)$. As a polynomial in
$\alpha$, as observed by Siegel, it is in fact a polynomial in
$\alpha^2$ with non-negative coefficients, and it is non-constant
because $c\not=0$. (It is not difficult to check that the degree, as
polynomial in $\alpha$, is $2\rank(c)$). This explains the limit
$$
\lim_{y\ra +\infty}{|\det(c(x+iy)+d)|^2}=+\infty,
$$
but it shows also that it is uniform over the compact set
$\mathcal{U}_g$, and over the $\gamma\in C_g$ with
$c\not=0$. Therefore we can find $y_0$ large enough so
that~(\ref{eq-finite}) holds for all $z\in U_g$, and indeed holds with
the strict condition $|\det(cz+d)|>1$ on the right-hand side. By the
remark after~(\ref{eq-finite}), this means that $z$ is not in the
boundary of $\mathcal{F}_g$, and hence~(\ref{eq-strict}) holds for all
$\gamma$ with $c\not=0$.
\end{proof}

\begin{remark}
  The argument is very clear when $\det(c)\not=0$: we write
\begin{align*}
\det(c(x+iy)+d)&=\det(iyc)\det(1-iy^{-1}c^{-1}(cx+d))\\
&=
(iy)^g\det(c) (1+O(y^{-1}))
\end{align*}
for fixed $(c,d)$, uniformly for $x\in \mathcal{U}_g$.
\end{remark}

\begin{remark}
  It would be interesting to know the optimal value of $y_0$ in
  Lemma~\ref{lemma-strict}. For $g=1$, any $y_0>1$ is suitable. For
  $g=2$, Gottschling~\cite[Satz 1]{gott} has determined a finite set
  $C_2$ which determines as above the Siegel fundamental domain,
  consisting of $19$ pairs of matrices $(c,d)$; there are $4$ in which
  $c$ has rank $1$, $c$ is the identity for the others. Precisely: for
  $c$ of rank $1$, $(c,d)$ belongs to
$$
\Bigl\{
\Bigl(\begin{pmatrix}
1&0\\
0&0
\end{pmatrix},\begin{pmatrix}
0&0\\
0&1
\end{pmatrix}
\Bigr),
\Bigl(\begin{pmatrix}
0&0\\
0&1
\end{pmatrix},\begin{pmatrix}
1&0\\
0&0
\end{pmatrix}
\Bigr),
\Bigl(\begin{pmatrix}
1&-1\\
0&0
\end{pmatrix},\begin{pmatrix}
1&\text{$0$ or $1$}\\
-2&1
\end{pmatrix}
\Bigr)
\Bigr\},
$$
and for $c$ of rank $2$, we have $c=1$ and $d$ belongs to
$$
\Bigl\{
0,
\begin{pmatrix}
s&0\\
0& 0
\end{pmatrix},
\begin{pmatrix}
0&0\\
0& s
\end{pmatrix},
\begin{pmatrix}
s&0\\
0& s
\end{pmatrix},
\begin{pmatrix}
s&0\\
0& -s
\end{pmatrix},
\begin{pmatrix}
0&s\\
s& 0
\end{pmatrix},
\begin{pmatrix}
s&s\\
s& 0
\end{pmatrix},
\begin{pmatrix}
0&s\\
s& s
\end{pmatrix}
\Bigr\}
$$
where $s\in \{-1,1\}$. It should be possible to deduce a value of
$y_0$ using this information. Indeed, quick numerical experiments
suggest that, as in the case $g=1$, any $y_0>1$ would be suitable.
\end{remark}

\begin{remark}
  Analogues of Corollary~\ref{cor-simple} can not be derived
  immediately in the setting of Siegel modular forms because the link
  between Fourier coefficients and Satake parameters is much more
  involved; the case $g=2$ is considered, together with further
  applications and quantitative formulations, in~\cite{kst}.
\end{remark}

\end{document}